\documentclass[%
  abstract=true,%
  smallheadings,%
]{scrartcl}

\usepackage[T1]{fontenc}
\usepackage[utf8]{inputenc}
\usepackage{lmodern}
\usepackage{amsmath,amsfonts,amsthm,amssymb}
\usepackage{mathtools}
\usepackage{todonotes}
\usepackage[sort&compress,numbers]{natbib}
\usepackage{overpic}
\usepackage[outline]{contour}
\contourlength{1pt}
\usepackage{microtype}
\usepackage{booktabs}

\setkomafont{sectioning}{\normalfont\bfseries}
\setkomafont{descriptionlabel}{\itshape}

\usepackage[hidelinks]{hyperref}
\hypersetup{
  pdfauthor={Zijia Li, Josef Schicho, Hans-Peter Schröcker},%
  pdftitle={Spatial straight line linkages by factorization of motion polynomials},%
}

\theoremstyle{definition}
\newtheorem{lemma}{Lemma}

\theoremstyle{theorem}
\newtheorem{theorem}{Theorem}
\theoremstyle{remark}
\newtheorem{remark}{Remark}

\newcommand{\SE}[1][3]{\mathrm{SE}(#1)}

\newcommand{\RSet}{\mathbb{R}}
\newcommand{\DSet}{\mathbb{D}}
\newcommand{\HSet}{\mathbb{H}}
\newcommand{\DHSet}{\DSet\HSet}
\newcommand{\eps}{\varepsilon}

\newcommand{\cj}[1]{\overline{#1}}

\newcommand{\qi}{\mathbf{i}}
\newcommand{\qj}{\mathbf{j}}
\newcommand{\qk}{\mathbf{k}}

\newcommand{\peq}{\equiv}

\title{Spatial straight line linkages by factorization of motion polynomials}
\author{Zijia Li\footnote{%
    Johann Radon Institute for Computational and Applied Mathematics,
    Austrian Academy of Science,
    \href{mailto:zijia.li@oeaw.ac.at}{zijia.li@oeaw.ac.at},
    \href{mailto:josef.schicho@risc.jku.at}{josef.schicho@risc.jku.at}} %
  \and Josef Schicho\footnotemark[1] %
  \and Hans-Peter Schröcker\footnote{%
    Unit Geometry and CAD, University of Innsbruck,
  \href{hans-peter.schroecker@uikb.ac.at}{hans-peter.schroecker@uibk.ac.at}}}

\begin{document}

\maketitle

\begin{abstract}
  We use the recently introduced factorization of motion polynomials
  for constructing overconstrained spatial linkages with a straight
  line trajectory. Unlike previous examples, the end-effector motion
  is not translational and the link graph is a cycle. In particular,
  we obtain a number of linkages with four revolute and two prismatic
  joints and a remarkable linkage with seven revolute joints one of
  whose joints performs a Darboux motion.
\end{abstract}

\par\noindent\emph{Keywords:} Single loop linkage, %
                              revolute joint, %
                              prismatic joint, %
                              straight line trajectory, %
                              Darboux linkage
\par\noindent\emph{MSC 2010:}
70B10 

\section{Introduction}
\label{sec:introduction}

Spatial mechanisms with exact straight line trajectories are rare. The
first non-trivial example is due to \cite{sarrus53}. It has the
property that all trajectories are straight lines and is nowadays
called Sarrus' 6R linkage. Multi-looped linkages, composed of
spherical and planar parts, with one straight line trajectory were
presented by Pavlin and Wohlhart in \cite{pavlin92}. Other mechanisms
with non-trivial straight line trajectories include the ``Wren
platform'' and some of its variants \cite{kiper11,wohlhart96} or the
generators for the vertical Darboux motion of Lee and Hervé
\cite{lee12}.

In this article we construct new single-looped linkages with a
straight line trajectory. In contrast to Sarrus' linkage, the
end-effector motion is not purely translational. In contrast to the
examples given by Pavlin and Wohlhart, the linkage is single-looped
and in general not composed of planar or spherical parts. In a special
case, we show that the Darboux motion can be uniquely decomposed in a
rotation and a circular translation and use this for the construction
of Darboux linkages which do not contain prismatic or cylindrical
joints and, in contrast to \cite{lee12}, perform the general Darboux
motion. To define the scope of this paper more precisely: We
systematically construct closed-loop straight line linkages with only
revolute or prismatic joints whose coupler motion is neither planar,
nor spherical, nor translational and has degree three in the dual
quaternion model of rigid body displacements.

We do not claim that spatial straight line linkages are of particular
relevance to engineering sciences. But it should be evident after
reading this paper that we gained new insight to some well-known
planar and spatial motions. The presented ideas may be extended to
other, more useful, synthesis tasks. Our basic tool is factorization
of motion polynomials, as introduced in \cite{hegedus13}. While that
paper presents a general theory and algorithmic treatment for the
generic case, a good deal of this paper deals with non-generic cases
and thus furthers our understanding of motion polynomial
factorization.

\section{Preliminaries}
\label{sec:preliminaries}

We continue with a brief introduction to the dual quaternion model of
rigid body displacements. In particular, we derive the straight line
constraint in that model and introduce the notion of ``motion
polynomials''.

\subsection{The straight line constraint}
\label{sec:straight-line-constraint}

We begin be deriving the constraint equation for all direct isometries
of Euclidean three-space that map one point $p$ onto a straight line
$L$. We do this in terms of dual quaternions, making use of the
well-known isomorphism between the group $\SE$ of direct isometries
and the factor group of unit dual quaternions modulo $\pm 1$. A dual
quaternion is an expression of the shape
\begin{equation*}
  h = h_0 + h_1\qi + h_2\qj + h_3\qk + \eps(h_4 + h_5\qi + h_6\qj + h_7\qk).
\end{equation*}
Multiplication of dual quaternions is defined by the rules
\begin{equation*}
  \qi^2 = \qj^2 = \qk^2 = \qi\qj\qk = -1,\quad
  \qi\eps = \eps\qi,\quad
  \qj\eps = \eps\qj,\quad
  \qk\eps = \eps\qk.
\end{equation*}
We denote the set of dual quaternions by $\DHSet$. The dual quaternion
$h$ may be written as $h = p + \eps q$ with ordinary quaternions $p$,
$q \in \HSet$, its the \emph{primal} and \emph{dual part.}

After projectivizing $\DHSet$, we obtain Study's kinematic mapping
$\SE \to P^7$, see for example \cite{husty12}. The unit dual
quaternion $x + \eps y$ acts on $p = (p_0,p_1,p_2) \in \RSet^3$
according to
\begin{equation}
  \label{eq:1}
  1 + \eps (p_1'\qi + p_2'\qj + p_3'\qk) = (x - \eps y)(1 + \eps
  (p_0\qi + p_1\qj + p_2\qk))(\cj{x + \eps y}).
\end{equation}
The dual quaternion $x + \eps y$ is projectively equal to a unit norm
dual quaternion, if the Study condition $x\cj{y} + y\cj{x} = 0$ is
fulfilled. The action of $x + \eps y$ on $p$ is still defined as in
\eqref{eq:1} but the right-hand side has to be divided by $x\cj{x}$.
It is hence a rational expression in the components of $x$ and $y$.

Straight line constraints in the dual quaternions setting are the
topic of \cite[Section~5.1]{selig11}. We re-derive a dual quaternion
condition for a particular case. Choosing appropriate Cartesian
coordinates in the moving frame, we may assume $p = (0,0,0)$.
Similarly, it is no loss of generality to assume that $\{(t,0,0) \mid
t \in \RSet \}$ is the set of points on $L$. Writing $x = x_0 + \qi
x_1 + \qj x_2 + \qk x_3$ and $y = y_0 + \qi y_1 + \qj y_2 + \qk y_3$,
the second and third coordinate of $p'$ vanish if and only if
\begin{equation}
  \label{eq:2}
  x_0 y_2 - x_1 y_3 - x_2 y_0 + x_3 y_1 = 0,
  \quad
  x_0 y_3 + x_1 y_2 - x_2 y_1 - x_3 y_0 = 0.
\end{equation}
This system has to be augmented with the Study condition
\begin{equation}
  \label{eq:3}
  x_0y_0 + x_1y_1 + x_2y_2 + x_3y_3 = 0.
\end{equation}
It is straightforward to check that the system of
equations~\eqref{eq:2} and \eqref{eq:3} has the solution
\begin{equation}
  \label{eq:4}
  x \peq \qi y
  \quad
  \text{or, equivalently,}
  \quad
  y \peq -\qi x
\end{equation}
where ``$\peq$'' denotes equality in projective sense, that is, up to
multiplication with constant scalars.

\subsection{Motion polynomials}
\label{sec:motion-polynomials}

Denote the set of all polynomials in the indeterminate $t$ by
$\DHSet[t]$ and, similarly, by $\RSet[t]$ the set of polynomials in
$t$ with real coefficients. A parameterized rational motion is given
by a polynomial $C = X + \eps Y \in \DHSet[t]$ with the property
$X\cj{Y} + Y\cj{X} = 0$ or, equivalently, $C\cj{C} \in \RSet[t]$ (the
conjugate polynomial is obtained by conjugating the
coefficients). These polynomials have been called \emph{motion
  polynomials} in \cite{hegedus13}. Their coefficients are dual
quaternions and do not commute. Therefore, additional conventions for
notation, multiplication and evaluation are necessary:
\begin{itemize}
\item We always write coefficients to the left of the indeterminate
  $t$. This convention is sometimes emphasized by speaking of
  ``left-polynomials'' but we just use the term
  ``polynomial''.
\item Multiplication of polynomials uses the additional rule that the
  indeterminate $t$ commutes with all coefficients.
\item The value of the polynomial $C = \sum_{i=0}^n c_it^i$ at $h \in
  \DHSet$ is defined as $C = \sum_{i=0}^n c_ih^i$, that is, it is
  obtained by substituting $t$ by $h$ in the \emph{expanded form}.
\end{itemize}
Here is a short example to clarify these conventions. Consider the
polynomial $C = (t-h)(t-k)$ with $h$, $k \in \DHSet$. Its expanded
form reads $C = t^2 - (h + k)t + hk$ (we used commutativity of $t$ and
$k$). The dual quaternion $k$ is a zero of $C$ but $h$ is, in general,
not:
\begin{equation*}
  C(k) = k^2 - (h + k)k + hk = 0,\quad
  C(h) = h^2 - (h + k)h + hk = hk - kh.
\end{equation*}
Substituting $t$ by $h$ in the factorized form gives a different
value. This is clear since factorized form and expanded form are only
equivalent under commutativity assumptions.

Above examples suggest a relation between right factors and zeros of
motion polynomials that, in fact, holds true in a more general
setting. The following lemma has been stated in proved in
\cite[Lemma~2]{hegedus13}.
\begin{lemma}
  \label{lem:1}
  Let $P \in \DHSet[t]$ and $h \in \DHSet[t]$. Then $t-h$ is a right
  factor of $P$ (there exists $Q \in \DHSet$ such that $P = Q(t-h)$)
  if and only if $P(h) = 0$.
\end{lemma}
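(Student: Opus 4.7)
The plan is to prove both directions by direct calculation, using only the evaluation convention (substitute in the \emph{expanded} form) and the fact that the indeterminate $t$ commutes with every dual quaternion coefficient.

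For the direction ``$\Rightarrow$'', I would assume $P = Q(t-h)$ and write $Q = \sum_{i=0}^{n-1} q_it^i$. Multiplying out and pushing $h$ to the left of $t^i$ via $t^i h = h t^i$ gives the expanded form
\begin{equation*}
  P = \sum_{i=0}^{n-1} q_i t^{i+1} - \sum_{i=0}^{n-1} q_i h\, t^i.
\end{equation*}
By the evaluation rule,
\begin{equation*}
  P(h) = \sum_{i=0}^{n-1} q_i h^{i+1} - \sum_{i=0}^{n-1} q_i h\cdot h^i = 0,
\end{equation*}
the two sums canceling term-by-term.

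For ``$\Leftarrow$'', I would carry out right polynomial division of $P$ by $t-h$. Because the leading coefficient of $t-h$ is $1$, which is central and invertible, the standard division algorithm applies without modification: successively subtracting $p_j t^{j-1}(t-h)$ from the running remainder strictly lowers its degree, so after finitely many steps one obtains $P = Q(t-h) + r$ with $r \in \DHSet$ a constant. Applying the ``$\Rightarrow$'' computation to the factored summand yields $P(h) = r$, so $P(h) = 0$ forces $r = 0$ and hence $P = Q(t-h)$.

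The only delicate point — and the main temptation to avoid — is the seemingly obvious short-cut $Q(t-h)\big|_{t=h} = Q(h)(h-h) = 0$, which would require $h$ to commute with the coefficients of $Q$ and is in general \emph{false} in $\DHSet$. The expanded-form calculation above sidesteps this pitfall: after rewriting $q_i t^i h$ as $q_i h\, t^i$ the substitution $t\mapsto h$ produces the monomial $q_i h^{i+1}$, matching the contribution of the leading term $q_i t^{i+1}$ regardless of non-commutativity. This is precisely why the evaluation convention was fixed through the expanded form in the preceding example.
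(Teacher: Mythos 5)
Your proof is correct: the expanded-form computation for the forward direction, the monic right-division argument for the converse, and the explicit warning against substituting into the factored form are all sound. Note that the paper itself gives no proof of this lemma but cites \cite[Lemma~2]{hegedus13}; your argument is the standard non-commutative remainder-theorem proof given there, so there is nothing to add or correct.
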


In order to apply motion factorization for the construction of
straight line linkages, we need to find a polynomial
$C = X + \eps Y \in \DHSet[t]$ that satisfies \eqref{eq:4} identically
in $t$. This already implies that $C$ is a motion polynomial. Our
construction of straight line linkages is largely based on the
factorization theorem for motion polynomials
\cite[Theorem~1]{hegedus13}. This theorem states, that a monic motion
polynomial of degree $n$ generically admits $n!$ factorizations of the
shape
\begin{equation}
  \label{eq:5}
  C = (t-h_1) \cdots (t-h_n)
\end{equation}
with $h_i \in \DHSet$ representing rotations or translations. 

The algorithm for computing factorizations in generic cases is
explained in \cite{hegedus13} and, in more algorithmic form, in
\cite{hegedus13b}. A basic understanding of this algorithm is
necessary for reading this paper. Therefore, we provide an informal
but detailed description. A more formal algorithmic description in
pseudo-code is given in \cite{hegedus13b}, actual implementations can
be found in the supplementary material of~\cite{hegedus13}.

The norm polynomial $C\cj{C}$ is real and factors into the product
$C\cj{C} = M_1\cdots M_n$ of $n$ quadratic factors. Each factor $M_i$
is either irreducible over $\RSet$ or the square of a linear factor.
In order to compute a factorization of the shape \eqref{eq:5}, we pick
one of the quadratic factors, say $M_i$, and right-divide $C$ by
$M_i$. That is, we compute $Q,R \in \DHSet[t]$ such that $\deg R \le
1$ and $C = QM_i + R$. In general, $R$ has a unique zero -- the
rotation or translation polynomial $h_n$. Once the rightmost factor
$h_n$ has been computed, we compute $P_1$ such that $P = P_1(t-h_n)$ and
repeat above steps with $P_1$ instead of $P$. Note that
\begin{equation*}
  P_1\cj{P_1} = \prod_{j \neq i} M_j
\end{equation*}
such that all factors of the original norm polynomial $C\cj{C}$ will
be used during this process. In this sense, we can say that a factor
$t-h_i$ or the rotation/translation quaternion $h_i$ itself
``corresponds'' to a factor $M_j$. Different factorizations come from
permutations of the factors of $C\cj{C}$.

In exceptional cases, the leading coefficient of the linear remainder
polynomial $R$ fails to be invertible. Then, above algorithm will not
produce a valid factorization. This does, however, not mean that no
factorization exists. In fact, in this paper we will encounter
situations with no or infinitely many factorizations of the shape
\eqref{eq:5}.

The kinematic interpretation of motion polynomial factorization is
that the motion polynomial parameterizes the rational end-effector
motion of, in general, $n!$ open chains consisting of $n$ revolute or
prismatic joints. Linkages are obtained by suitably combining a
sufficient number of these open chains. In case of $\deg C \le 3$, two
suitably chosen open chains are in general sufficient and will result in
an overconstrained, single-looped linkage. Constructions of this type
are the topic of this paper's main section.

\section{Mechanism synthesis}
\label{sec:mechanism-synthesis}

The most general polynomial solution of \eqref{eq:4} is given by
\begin{equation*}
  C = X + \eps Y\quad\text{with}\quad
  X = \xi P;\quad
  Y = -\eta \qi P;\quad
  P \in \HSet[t];\quad
  \xi,\ \eta \in \RSet[t]
\end{equation*}
Let us verify that the trajectory of $p = (0,0,0)$ is really a
straight line. According to \eqref{eq:1}, the image $p'$ of $p$ can be
read off from
\begin{equation}
  \label{eq:6}
  \begin{aligned}
    1 + p'  &\peq (X - \eps Y)(\cj{X + \eps Y})
             = (\xi P + \eps \eta \qi P)(\cj{\xi P - \eps \eta \qi P}) \\
            &= (\xi P + \eps \eta \qi P)(\xi \cj{P} + \eps \eta \cj{P} \qi)
             = P\cj{P}(\xi^2 + 2 \eps\xi\eta\qi).
  \end{aligned}
\end{equation}
Indeed, the right-hand side of \eqref{eq:6} leads to a point on the
line $L$. More precisely, a parameterized equation of the trajectory is
\begin{equation*}
  p'(t) = \frac{2\eta}{\xi}\qi.
\end{equation*}
From this, we conclude that $\eta = 0$ or constant $\xi$ and $\eta$
yield a constant point $p'$. The resulting motion is spherical and
shall be excluded from further consideration. That is, we can assume
$\eta \neq 0$ and $\xi$, $\eta$ are not both constant. This implies
$\deg P < \deg C$. In order to narrow the focus of this paper, we also
wish to avoid $\deg P = 0$ or, more generally, $P \in \RSet[t]$. This
leads to a translation in constant direction\,---\,a motion which is
planar in multiple ways.\footnote{Note however, that the factorization
  of a translation in constant direction not necessarily lead to
  planar linkages. An example of this are Sarrus linkages with
  rational coupler motion.} By a change of coordinates we can achieve
that $C$ is monic whence $\deg \eta < \deg \xi$. Finally, we may
transfer constant real factors between $P$ and $\xi$, so that we can
assume that both, $P$ and $\xi$ are monic. Summarizing these
constraints, we have:
\begin{equation*}
  0 \le \deg \eta < \deg \xi,\quad
  1 \le \deg P < \deg C \le 3,\quad
  P \notin \RSet[t],\quad
  \xi, P \text{ are monic.}
\end{equation*}
Hence, we only have to discuss two major cases, $\deg P = 1$ and $\deg
P = 2$. The former has three sub-cases ($\deg \xi = 1$ and $\deg \eta
= 0$, $\deg \xi = 2$ and $\deg \eta = 0$, $\deg \xi = 2$ and $\deg
\eta = 1$), the latter only one ($\deg \xi = 1$, $\deg \eta = 0$).

\subsection{The case of degree two}
\label{sec:case-degree-two}

We consider the case $\deg P = 2$, $\deg \xi = 1$, and $\deg \eta = 0$
first. The norm polynomial admits the factorization $C\cj{C} =
M_1M_2M_3$ where $ \xi^2 = M_1$ and $P\cj{P} = M_2M_3$. This is
already a special case as one factor, $M_1$, is not strictly positive.
The following theorem gives a relation between the factors of a motion
polynomial and the factors of its norm polynomial for this case.

\begin{theorem}
  \label{th:1}
  The norm polynomial of a motion polynomial factors as $C\cj{C} =
  \prod_{i=1}^n M_i$ with non-negative factors $M_1,\ldots,M_n$ which
  are either irreducible over $\RSet$ or the squares of linear
  polynomials in $\RSet[t]$. If $M$ is such a square, the
  corresponding factor $t-h$ in every factorization of $C$ describes a
  translation.
\end{theorem}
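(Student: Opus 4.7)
The plan is two-pronged, matching the two assertions in the theorem. For the existence of the stated real factorization, I would start from the Study condition, which gives $C\cj{C}=X\cj{X}\in\RSet[t]$. For every real $t_0$ the value equals $X(t_0)\cj{X(t_0)}=\Norm{X(t_0)}^2\ge 0$, so $C\cj{C}$ is a non-negative polynomial. The described factorization is then the classical factorization of such a polynomial in $\RSet[t]$: strictly complex roots come in conjugate pairs and yield irreducible quadratics, while real roots must have even multiplicity and combine into perfect squares of linear factors.

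For the main assertion I would fix a factorization $C=(t-h_1)\cdots(t-h_n)$ and attach to each linear factor its norm
\begin{equation*}
  M_i':=(t-h_i)\cj{(t-h_i)}=t^2-(h_i+\cj{h_i})\,t+h_i\cj{h_i}.
\end{equation*}
Each $M_i'$ belongs to $\RSet[t]$ because $t-h_i$ is itself a motion polynomial. Since elements of $\RSet[t]$ are central in $\DHSet[t]$ (their coefficients commute with every dual quaternion and with the indeterminate), I can telescope the product
\begin{equation*}
  C\cj{C}=(t-h_1)\cdots(t-h_n)(t-\cj{h_n})\cdots(t-\cj{h_1})
\end{equation*}
from the inside out, collapsing $(t-h_n)(t-\cj{h_n})$ to $M_n'$ and pulling it past the surrounding factors, then $M_{n-1}'$, and so on. This yields $C\cj{C}=M_1'\cdots M_n'$. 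By unique factorization in $\RSet[t]$ the multiset $\{M_i'\}$ coincides with $\{M_j\}$, pairing each $t-h_i$ with a specific $M_j$ in the sense used throughout the paper.

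Finally, suppose the factor $M_i'$ paired with $h:=h_i$ is a square $(t-\alpha)^2$. Writing $h=p+\eps q$ with $p,q\in\HSet$, the conditions $h+\cj{h}\in\RSet$ and $h\cj{h}\in\RSet$ (needed for $t-h$ to be a motion polynomial) force $q$ to be a pure quaternion orthogonal to $\mathrm{Im}(p)$. A direct computation then gives $M_i'=t^2-(p+\cj{p})\,t+p\cj{p}$ with real discriminant $-4|\mathrm{Im}(p)|^2\le 0$, vanishing exactly when $p\in\RSet$. Under the standard kinematic dichotomy, a linear motion polynomial $t-h$ represents a rotation when $p\notin\RSet$ (irreducible norm) and a translation when $p\in\RSet$ (square norm). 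Hence a square norm factor forces $h_i$ to describe a translation, with translation direction governed by $q$.

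The hard part is the middle step: turning the factorization of $C$ in $\DHSet[t]$ into a factorization of $C\cj{C}$ in $\RSet[t]$ whose factors can be individually assigned to the $h_i$. A priori, the noncommutativity of $\DHSet[t]$ could obstruct such a rearrangement; it is the centrality of $\RSet[t]$ that makes the pull-through free. Once the pairing $\{M_i'\}=\{M_j\}$ is in place, the remainder of the proof is a one-line discriminant calculation together with the classical rotation/translation characterization for linear motion polynomials.
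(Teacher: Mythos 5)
Your proof is correct, but it takes a genuinely different route from the paper's. The paper cites \cite{hegedus13} for the first assertion and, for the translation claim, argues as follows: by \cite[Lemma~3]{hegedus13} the dual quaternion $h$ in the factor $t-h$ corresponding to $M=(t-r)^2$ is a common zero of $C$ and $M$, so $h^2-2hr+r^2=0$; by \cite[Theorem~2.3]{huang02} this quadratic equation forces the primal part of $h$ to equal $r\in\RSet$, whence $h$ is a translation quaternion. You instead give a self-contained argument: telescoping $C\cj{C}=(t-h_1)\cdots(t-h_n)(t-\cj{h_n})\cdots(t-\cj{h_1})$ using centrality of $\RSet[t]$ to get $C\cj{C}=M_1'\cdots M_n'$ with $M_i'=(t-h_i)\cj{(t-h_i)}$, then computing the discriminant $-4\vert\mathrm{Im}(p)\vert^2$ of $M_i'$ to conclude that $M_i'$ is a square exactly when the primal part $p$ of $h_i$ is real, i.e.\ exactly when $t-h_i$ is a translation polynomial. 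Your approach buys independence from the two external citations, re-proves the first assertion rather than quoting it, and makes the notion of ``corresponding factor'' concrete ($M_j$ is literally the norm of $t-h_i$); the paper's approach is shorter because it leans on the known common-zero lemma and the Huang--So classification of quaternionic roots. Two small points you should make explicit: (i) the identification of the multiset $\{M_i'\}$ with $\{M_j\}$ uses that the grouping of real linear factors of a non-negative polynomial into perfect squares is forced, since a product $(t-\alpha)(t-\beta)$ with $\alpha\neq\beta$ is neither irreducible nor a square; and (ii) your pairing agrees with the paper's algorithmic correspondence because the norm $(t-h_i)\cj{(t-h_i)}$ is the unique monic real quadratic vanishing at $h_i$.
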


The first part of this proposition is already due to \cite{hegedus13}.
The statement on the translation can also be found there but it is
only motivated, not proved.

\begin{proof}[Proof of \autoref{th:1}]
  If $t-h$ is a factor corresponding to $M$, the dual quaternion $h$
  is necessarily a common zero of $C$ and $M$
  (\cite[Lemma~3]{hegedus13}). In particular, if $M = (t-r)^2$ with $r
  \in \RSet$, we can evaluate the condition
  \begin{equation*}
    h^2 - 2hr + r^2 = 0.
  \end{equation*}
  By \cite[Theorem~2.3]{huang02}, this equation can only be satisfied
  by dual quaternions of primal part $r \in \RSet$. Hence, $h$ is
  necessarily a translation quaternion.
\end{proof}

By \autoref{th:1}, every factorization of $C$ contains at least one
prismatic joint, corresponding to $M_1$. Two of them are obtained from
the two factorizations,
\begin{equation}
  \label{eq:7}
  P = (t-h_1)(t-h_2) = (t-h'_1)(t-h'_2)
  \quad\text{with}\quad
  h_1,h_2,h'_1,h'_2 \in \HSet
\end{equation}
of $P$ over $\HSet$.\footnote{Motion polynomials over
  $\HSet$ always admit a finite number of factorizations.}  They are
\begin{align}
  \label{eq:A1}
  \tag{$A$}
  C &= (\xi - \eps\eta\qi)(t-h_1)(t-h_2) \\
  \label{eq:A2}
  \tag{$A'$}
    &= (\xi - \eps\eta\qi)(t-h'_1)(t-h'_2).
\end{align}
The open chains to each factorization consist of two revolute joints,
intersecting in the origin $p$, and one prismatic joint in direction
of $\qi$. The trajectory of $p$ is trivially a straight line.

Two further factorizations are of the shape
\begin{align}
  \label{eq:B1}
  \tag{$B$}
  C &= (t-r_1)(t-r_2)(t-s_1) \\
  \label{eq:B2}
  \tag{$B'$}
    &= (t-r'_1)(t-r'_2)(t-s_1)
\end{align}
with rotation quaternions $r_1$, $r_2$, $r'_1$, $r'_2 \in \DHSet$ and a
translation quaternion $s_1 \in \RSet + \eps\HSet$.

Finally, there are two factorizations with factors $t-r_1$, $t-r_2$ on
the left and factors $t-h_2$, $t-h'_2$ on the right:
\begin{align}
  \label{eq:C1}
  \tag{$C$}
  C &= (t-r_1)(t-s_2)(t-h_2) \\
  \label{eq:C2}
  \tag{$C'$}
    &= (t-r'_1)(t-s'_2)(t-h'_2).
\end{align}
Here, the translation quaternions are $s_2$ and $s'_2$. In each chain,
the last revolute axis (corresponding to the factor on the right)
contains the origin $p$ of the moving frame.

Assuming that the two factorizations in \eqref{eq:7} are really
different, a suitable combination of the factorizations
\eqref{eq:A1}--\eqref{eq:C2} results in spatial linkages with a
straight line trajectory. We will have a closer look at the manifold
relations between the involved joint axes. This will deepen our
geometric understanding of these linkage classes and provide us with
necessary conditions on the linkage's Denavit-Hartenberg parameters.

To begin with, it must be noted that not every combination of two open
chains resulting from the factorizations \eqref{eq:A1}--\eqref{eq:C2}
is admissible for the construction of overconstrained, single looped
linkages with one degree of freedom. In order to avoid ``dangling''
links, we must not combine two factorizations with the same factor at
the beginning or at the end. Hence, we have only four essentially
different admissible pairings:
\begin{equation*}
  \text{$A$--$B$},\
  \text{$A$--$C'$},\
  \text{$B$--$C'$},\
  \text{and}\
  \text{$C$--$C'$}.
\end{equation*}
Non-admissible pairings do not give suitable linkages but information
on joint axes. If two factorizations have a common factor at the
beginning or the end, the remaining factors can be assembled into a
closed linkage with four joints. Consider, for example, the factors
\eqref{eq:A1} and \eqref{eq:C1}. Their closure equation simplifies to
\begin{equation*}
  \begin{aligned}
    1 &\peq (\xi - \eps\eta\qi)(t-h_1)(t-h_2)(\cj{t-h_2})(\cj{t-s_2})(\cj{t-r_1}) \\
      &\peq (\xi - \eps\eta\qi)(t-h_1)(\cj{t-s_2})(\cj{t-r_1}) \\
      &= (\xi - \eps\eta\qi)(t-h_1)(t-\cj{s_2})(t-\cj{r_1}).
  \end{aligned}
\end{equation*}
Hence, the axes of the pair $(h_1,r_1)$, and also that of
$(h'_1,r'_1)$, $(h_2,r_2)$, and $(h'_2,r'_2)$, are parallel because
they are revolute axes in overconstrained RPRP linkages. A similar
argument shows that the axes to $r_1$, $r'_1$, $r'_2$, and $r_2$
define a Bennett linkage. Finally, the axes to $\eps\qi$, $h_1$,
$h'_1$, $h_2$, and $h'_2$ intersect in the point $p$ whose trajectory
is a straight line. These observations are responsible for special
geometric features of the admissible linkages.
\begin{description}
\item[Type $A$--$B$:] The linkage is of type PRRPRR. The second and third
  axes intersect. The second and sixth axis and the third and fifth
  axis are parallel.
\item[Type $A$--$C'$:] In this linkage, three consecutive revolute
  axes (corresponding to $h_1$, $h_2$, $h'_2$) intersect so that we
  may view it as PSPR linkage. However, because of \eqref{eq:7} we have
  \begin{equation*}
    (t-h_1)(t-h_2)(\cj{t-h'_2}) \peq t-h'_1
  \end{equation*}
  and the spherical joint can actually be replaced by a revolute
  joint. It has to be noted that this replacement cuts away the end
  effector and, thus, changes the end effector motion. One consequence
  of this mental collapsing of S joint into R joint are the angle
  equalities
  \begin{equation*}
    \sphericalangle(\qi, r_1) = \sphericalangle( r_1, s_2),\quad
    \sphericalangle(\qi,r'_1) = \sphericalangle(r'_1,s'_2).
  \end{equation*}
  which are known to hold for the corresponding RPRP linkages. Here,
  the angle between rotation and translation quaternions is to be
  understood as angle between their respective axis directions.
\item[Type $B$--$C'$:] This linkage of type RRPRPR contains a
  Bennett triple of revolute axes (axes one, two and six). 
\item[Type $C$--$C'$:] This is an RPRRPR linkages where the third and
  fourth axes intersect. An example is depicted in
  \autoref{fig:RPRRPR}. The linkage differs from Type $A$--$B$ in
  the linkage geometry and in the position of the link with straight
  line trajectory.
\end{description}

\begin{figure}
  \centering
  \includegraphics{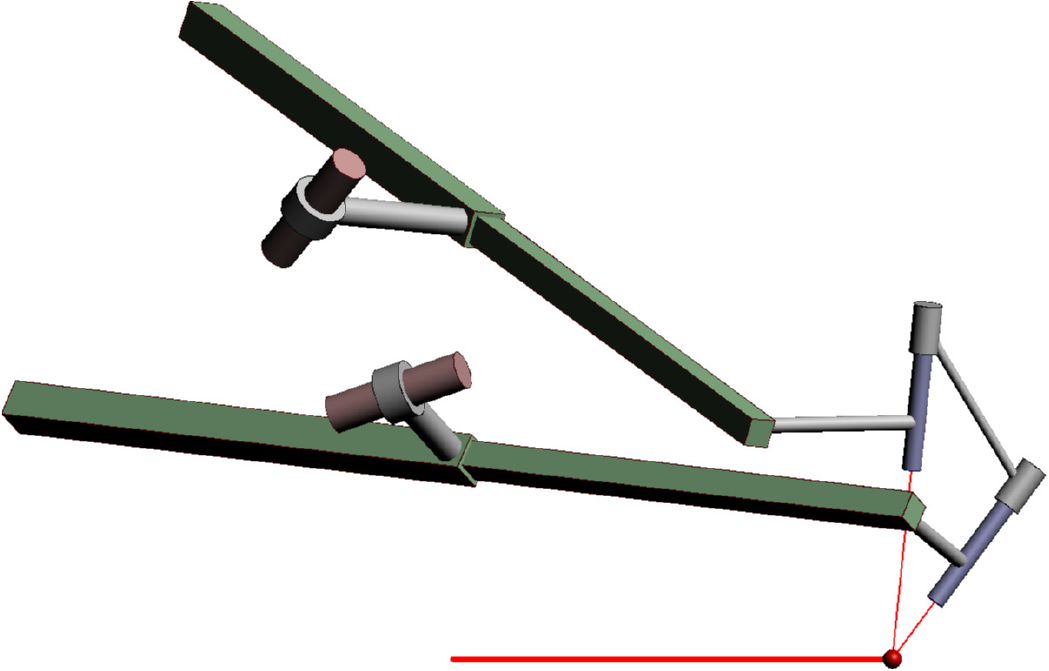}
  \caption{An RPRRPR linkage with a straight line trajectory}
  \label{fig:RPRRPR}
\end{figure}

\subsection{The cases of degree one}
\label{sec:cases-degree-one}

Now we turn to the case $\deg P = 1$ and start our discussion with the
sub-case $\deg \xi = 1$. The motion polynomial $C$ is of degree two
and it is well-known that its factorizations produce either Bennett
linkages or, in limiting cases, an RPRP linkage. The latter occurs
here because
\begin{equation*}
  C = (\xi - \eps\eta\qi)P
\end{equation*}
clearly is a factorization of $C$. The second factor, $P$, describes a
rotation about an axis through $p$, the first factor, $\xi - \eta\qi$,
describes a translation in direction of $\qi$. We omit the possible
computation of the second pair of revolute and prismatic joints as
this gives us no additional insight. Clearly, every point of either
rotation axis and in particular the point $p = (0,0,0)$ has a straight
line trajectory.

The remaining cases, $\deg P = 1$, $\deg \xi = 2$, and $\deg \eta = 0$
or $\deg \eta = 1$, can be discussed together. Motion polynomial and
norm polynomial are
\begin{equation*}
  C = (\xi - \eps\eta\qi)P
  \quad\text{and}\quad
  C\cj{C} = \xi^2 P\cj{P}.
\end{equation*}
We distinguish two sub-cases:

In the first case, the polynomial $\xi$ factors over the reals. Then,
by \autoref{th:1}, every closed linkage obtained from factorization of
$C$ has four prismatic and two revolute joints. The axes of the
revolute joints are necessarily parallel and the joint angles for
every parameter value $t$ sum to zero. For every fixed revolute joint
angle, the linkage admits a one-parametric translational motion along
a fixed line. Hence, it has two degrees of freedom and infinitely many
straight line trajectories.
  
In the second case, the polynomial $\xi$ is \emph{irreducible} over
the reals. Then every closed linkage obtained from factorization of
$C$ necessarily consists of only revolute joints which makes the
envisaged generation of a straight line trajectory even more
interesting.  It will turn out that this is only possible under very
special circumstances.

Setting $\xi = t^2 + x_1t + x_0$, $P = t - h$, and $\eta = y_1t + y_0$
with $h \in \HSet$ and $x_0$, $x_1$, $x_2$, $y_0$, $y_1 \in \RSet$, we
assume that $C$ factors as $C = C_1(t-k)$ with a rotation or translation
quaternion $k$. By \autoref{lem:1}, $k$ must be a zero of $C$. We set
$k = k_1 + \eps k_2$ with $k_1$, $k_2 \in \HSet$ and compute
\begin{equation*}
  0 = C(k) = P(k')\xi(k') +
    \eps\bigl( P(k_1)(k_1k_2 + k_2k_1 + x_1k_2) + k_2\xi(k_1) - \qi P(k_1)\eta(k_1) \bigr).
\end{equation*}
In order for the primal part to vanish, we have either $P(k_1) = 0$ or
$\xi(k_1) = 0$. In the former case, we have $k_1 = h$ and the dual
part vanishes only if $k_2 = 0$ or $\xi(k_1) = 0$. If $k_2=0$, we have
$C_1=\xi - \qi\eta\eps$ and, by \autoref{th:4} in the appendix, $C_1$
admits no further factorization. Hence, we can assume $\xi(k_1) = 0$
in any case. This implies $x_1 = - (k_1 + \cj{k_1})$ and
$x_0 = k_1\cj{k_1}$.

The quaternion zeros of a quadratic equation are completely described
by \cite[Thereom~2.3]{huang02}. Because $\xi$ is irreducible over
$\RSet$ and $\xi(k_1) = 0$, we have
\begin{equation}
  \label{eq:8}
  k_1 = \frac{1}{2}(-x_1 + w(s_1\qi + s_2\qj + s_3\qk))
\end{equation}
where $w = \sqrt{4x_0-x_1^2}$ and $(s_1,s_2,s_3) \in S^2$. Given $k_1$
as in \eqref{eq:8}, the dual part $k_2$ of $k$ has to satisfy
\begin{equation*}
  P(k_1)(k_1k_2 + k_2k_1 + x_1k_2) - \qi P(k_1)\eta(k_1) = 0
  \quad\text{and}\quad
  k_1\cj{k_2} + k_2\cj{k_1} = 0.
\end{equation*}
Because of $\cj{k_2} = -k_2$, the second equation
implies $k_1k_2 = k_2\cj{k_1}$. We plug this in the first equation and
find
\begin{equation*}
  0 = P(k_1)(k_2(\underbrace{\cj{k_1} + k_1}_{-x_1}) + x_1k_2) - \qi P(k_1)\eta(k_1)
    = -\qi P(k_1)\eta(k_1).
\end{equation*}
This is only possible if $P(k_1) = 0$. Hence, we have $k_1 = h$, $x_1
= -h - \cj{h}$ and $x_0 = h\cj{h}$ or, equivalently, $P\cj{P} = \xi$.
We will prove in \autoref{th:2} below that the motion parameterized by
$C$ is the well-known Darboux motion, see \cite{krames37,lee12} or
\cite[Chapter~9, \S3]{bottema90}. This is the unique non-planar,
non-spherical and non-translational motion with only planar
trajectories. It is the composition of a planar elliptic motion and a
harmonic oscillation perpendicular to the plane of the elliptic
motion. Its trajectories are ellipses with the same major axis length
and some trajectories indeed degenerate to straight line segments.

\begin{theorem}
  \label{th:2}
  Unless $h$ lies in the linear span of $\qj$ and $\qk$, the motion
  parameterized by $C = \xi P - \qi\eta\eps P \in \DHSet[t]$ with $P =
  t - h \in \HSet[t] \setminus \RSet[t]$, $\xi = P\cj{P}$, $\eta \in
  \RSet[t]$, $\eta \neq 0$, $\deg \eta \le 1$ is a \emph{Darboux
    motion.}
\end{theorem}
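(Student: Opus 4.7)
The plan is to compute the trajectory of an arbitrary point $p$ under the motion parameterised by $C$ and show that it is a (possibly degenerate) planar ellipse; by the characterisation cited just before the theorem, this will identify the motion as a Darboux motion once the translational, spherical and planar degenerations are excluded.

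First I would use the factorisation $C = (\xi - \eps\eta\qi)P$ to compute the image $\tilde p'(t)$ of the pure quaternion $\tilde p = p_0\qi+p_1\qj+p_2\qk$ directly from~\eqref{eq:1}. Because $\xi$, $\eta$ and $P\cj P$ are all real polynomials, most cross terms cancel and one obtains
\begin{equation*}
    \tilde p'(t) \;=\; \frac{P(t)\,\tilde p\,\cj P(t)}{P(t)\cj P(t)} \;+\; \frac{2\eta(t)}{\xi(t)}\,\qi .
\end{equation*}
Thus $C$ parameterises the composition of the rotation by the unit quaternion $P(t)/\sqrt{\xi(t)}$ about the axis through the origin in direction of the vector part $\vec h$ of $h$, followed by a translation by $\tau(t)\qi$, where $\tau(t) := 2\eta(t)/\xi(t)$.

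Next I would reparameterise by the rotation angle $\phi$. Since $t\mapsto t+h_0$ preserves every hypothesis, I may assume $h_0 = 0$, whence $\xi = t^2 + \Norm{\vec h}^2$ and $\cos(\phi/2) = t/\sqrt\xi$, $\sin(\phi/2) = \Norm{\vec h}/\sqrt\xi$. Because $\deg\eta \le 1$, a brief computation yields
\begin{equation*}
    \tau \;=\; A + B\cos\phi + D\sin\phi
\end{equation*}
with constants $A$, $B$, $D$ depending only on $\Norm{\vec h}$ and the coefficients of $\eta$. Working in an orthonormal frame with $\vec n := \vec h/\Norm{\vec h}$ as third basis vector, and decomposing $\qi = (\qi\cdot\vec n)\vec n + \qi_{\perp}$, the in-plane component of $\tilde p'(\phi)$ — identified with a complex number on the plane $\vec n^{\perp}$ — takes the shape $\alpha e^{\mathrm{i}\phi} + \beta e^{-\mathrm{i}\phi} + \gamma$ with $\alpha,\beta,\gamma\in\CSet$, while the axial coordinate reads $(\tilde p\cdot\vec n) + (\qi\cdot\vec n)\tau$. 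Each of the three real coordinates is therefore of the form $c_0 + c_1\cos\phi + c_2\sin\phi$, so
\begin{equation*}
    \tilde p'(\phi) \;=\; \vec c_0 + \vec c_1\cos\phi + \vec c_2\sin\phi
\end{equation*}
lies in the affine plane through $\vec c_0$ spanned by $\vec c_1$, $\vec c_2$ and traces a (possibly degenerate) ellipse.

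It remains to rule out the three excluded degenerations. Non-translation follows from $\eta\neq 0$, non-sphericality from $\tau\not\equiv 0$, and non-planarity from the hypothesis: after the shift used above the condition $h\notin\langle\qj,\qk\rangle$ becomes $h_1\neq 0$, so $\qi\cdot\vec n\neq 0$ and the axial coordinate of $\tilde p'$ is genuinely $\phi$-dependent. The main obstacle I anticipate is purely computational: rewriting $\tau(t)$ cleanly in the angular parameter and then assembling the three coordinates into the stated affine-trigonometric form requires careful bookkeeping. Once that is done, planarity and ellipticity of every trajectory are immediate, and the identification with the Darboux motion follows from the cited uniqueness of non-planar, non-spherical, non-translational motions with only planar trajectories.
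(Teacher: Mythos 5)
Your proposal follows essentially the same route as the paper: compute the trajectory $\bigl(2\eta\qi + P\tilde{p}\cj{P}\bigr)/\xi$ of a general point $\tilde{p}$, establish that all trajectories are planar, exclude the translational, spherical and planar degenerations, and invoke the characterization of the Darboux motion quoted just before the theorem. The one step you handle differently is planarity: the paper simply observes that every coordinate function of the trajectory is rational of degree at most two, and a rational curve of degree at most two is automatically planar (four quadratic forms on a line are linearly dependent). This makes your reparameterization by the rotation angle and the explicit form $\vec{c}_0+\vec{c}_1\cos\phi+\vec{c}_2\sin\phi$ unnecessary, though your computation buys the extra information that the trajectories are ellipses, which the paper only asserts in the surrounding discussion.

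Two points need correction. First, non-translationality does not follow from $\eta\neq 0$; it follows from $P\notin\RSet[t]$, i.e.\ $\vec{h}\neq 0$, which makes the rotation factor $P\tilde{p}\cj{P}/\xi$ nontrivial. ($\eta\neq 0$ is what rules out the spherical case, as you correctly note via $\tau\not\equiv 0$.) Second, your normalization $h_0=0$ via $t\mapsto t+h_0$ does not preserve the hypothesis $h\notin\langle\qj,\qk\rangle$: if $h_0\neq 0$ but $h_1=0$, the original $h$ satisfies the stated hypothesis while the shifted one does not, and in that case the motion genuinely is planar (a rotation about an axis orthogonal to $\qi$ composed with a translation along $\qi$, so every trajectory stays in a plane with normal $\vec{h}$). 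What your argument actually needs, and what you in fact use, is $h_1\neq 0$, i.e.\ that the \emph{vector part} of $h$ does not lie in $\langle\qj,\qk\rangle$. This is arguably an imprecision in the theorem's own hypothesis, which the paper's terse remark ``because of our assumptions on $h$'' also glosses over; but your write-up should state the correct condition rather than hide the discrepancy behind a WLOG that silently alters the hypothesis.
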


\begin{proof}
  Using $P\cj{P} = \xi$, we compute the parametric equation
  \begin{equation*}
    \frac{2\eta\qi}{\xi} + \frac{P(x\qi + y\qj + z\qk)\cj{P}}{\xi}
  \end{equation*}
  for the trajectory of a point $(x, y, z)$. We see that all
  coordinate functions are at most quadratic. Hence, all trajectories
  are planar. Since $\eta$ is different from zero, it is not a
  spherical motion. Because of our assumptions on $h$, it is no planar
  or translational motion.
\end{proof}

We already excluded translational end-effector motions from our
considerations and can therefore focus on the factorization and
linkage construction for Darboux motions, given by $C$ as in
\autoref{th:2}. Algorithmic factorization, as explained in
\autoref{sec:motion-polynomials} fails for Darboux motions. Thus, a
special discussion is necessary. We already saw previously, that right
factors are necessarily of the shape $t - (h + \eps k_2)$.
Conversely, any linear polynomial of that shape is really a right
factor. The factorization is $C = C_1(t - (h + \eps k_2))$ where
\begin{equation}
  \label{eq:9}
  C_1 = \xi + \eps D
\end{equation}
and, with $k_2 = a\qi + b\qj + c\qk$,
\begin{multline}
  \label{eq:10}
  D = ((a - y_1)\qi + b\qj + c\qk)t -  a h_1 + b h_2 + h_3 c \\
                                         -  (h_0 a + h_2 c - h_3 b + y_0) \qi
                                         -  (h_0 b - h_1 c + h_3 a) \qj
                                         -  (h_0 c + h_1 b - h_2 a) \qk.
\end{multline}

The factorizability of $C_1$ is discussed in \autoref{th:4} in the
appendix. Summarizing the results there, we can say the following:
\begin{itemize}
\item The motion parameterized by $C_1$ is a planar translational
  motion whose trajectories are rational of degree two (or less).
\item It admits factorizations if and only if it parameterizes a
  circular translation. In this case, it admits infinitely many
  factorizations, corresponding to the multiple generation of a
  circular translation by parallelogram linkages.
\item A criterion for circular translations is $\xi \peq D\cj{D}$.
\end{itemize}

Thus, we only have to answer, under which conditions on $a$, $b$, $c$
Equation~\eqref{eq:9} is a circular translation or, equivalently, $\xi$ is a
factor of $D\cj{D}$. The latter gives convenient linear equations for
$a$, $b$, $c$. Writing
\begin{equation*}
  D\overline{D} = z_2t^2 + z_1t + z_0
\end{equation*}
where $D$ is as in \eqref{eq:10}, the linear system to solve is
\begin{equation}
  \label{eq:11}
  z_0x_1 - z_1x_0 = z_0x_2 - z_2x_0 = z_1x_2 - z_2x_1 = h_1a + h_2b + h_3c = 0.
\end{equation}
This overconstrained system has a matrix $M$. The greatest common
divisor of all $3 \times 3$ minors of $M$ is
\begin{equation*}
  \Delta \coloneqq 4(h_2^2+h_3^2)((h_0y_1+y_0)^2+y_1^2(h_1^2+h_2^2+h_3^2)).
\end{equation*}
Again, we need to distinguish two cases:

If $h_2 = h_3 = 0$, the motion is the composition of a rotation about
$\qi$ and a translation in direction $\qi$, that is, a \emph{vertical
  Darboux motion.} Because $P$ is not a real polynomial, $h_1$ is
different from zero and we necessarily have $a = 0$. This leaves us
with three conditions on the solubility:
\begin{equation*}
  y_1(h_0y_1 + y_0) = y_0((h_0^2+h_1^2)y_1+h_0y_0) = (h_0^2+h_1^2)y_1 + y_0^2 = 0.
\end{equation*}
A straightforward discussion shows that either $h_1$ or $y_1$ vanish.
But both, $h_1 = 0$ and $y_1 = 0$ have been excluded previously.
Hence, the vertical Darboux motion allows no factorizations into the
product of three linear factors.

If $h_2$ and $h_2$ are not both zero, $\Delta$ cannot vanish and the
system \eqref{eq:11} has the unique solution
\begin{equation*}
  a = \frac{y_1}{2},\quad
  b = \frac{y_0h_3 + y_1(h_0h_3-h_1h_2)}{2(h_2^2+h_3^2)},\quad
  c = \frac{y_0h_2 + y_1(h_0h_2+h_1h_3)}{2(h_2^2+h_3^2)}.
\end{equation*}
In other words, there is precisely one admissible choice for $k_2$
such that \eqref{eq:9} is a circular translation and admits
infinitely many factorizations. Thus, we have proved

\begin{theorem}
  \label{th:3}
  A non-vertical Darboux motion, parameterized by $C$ as in
  \autoref{th:2}, admits infinitely many factorization into linear
  motion polynomials. The first two factors on the left describe the
  same circular translation, the right factor is the same for all
  factorizations.
\end{theorem}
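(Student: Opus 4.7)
The plan is to reduce the theorem to what has already been established in the discussion preceding it. I would organise the argument into three steps.

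First, I would prove that the rightmost linear factor of $C$ is unique. By \autoref{lem:1}, such a factor has the form $t-k$ with $k = k_1 + \eps k_2$ a zero of $C$. The computation preceding the theorem already showed that, since $\xi$ is irreducible over $\RSet$, the primal part forces $k_1 = h$, and that the dual part $k_2 = a\qi + b\qj + c\qk$ must satisfy the overdetermined linear system~\eqref{eq:11}. In the non-vertical case $h_2^2 + h_3^2 \neq 0$, so $\Delta \neq 0$, the matrix of \eqref{eq:11} has a nonvanishing $3\times 3$ minor, and the explicit solution for $(a,b,c)$ displayed just above the statement is the unique one. This pins down the right factor $t - (h + \eps k_2)$ in every admissible factorization.

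Second, I would check that the resulting quotient $C_1 = \xi + \eps D$ from \eqref{eq:9} parameterizes a circular translation. The solution $(a,b,c)$ was chosen precisely so that $\xi$ divides $D\cj{D}$; combined with the criterion $\xi \peq D\cj{D}$ recalled in the bulleted summary of \autoref{th:4}, this identifies $C_1$ as a circular translation.

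Third, I would invoke the other item from the summary of \autoref{th:4}: a circular translation admits infinitely many factorizations into two linear motion polynomials, reflecting the multiple generation of such a motion by parallelogram linkages. Concatenating any of those factorizations of $C_1$ with the unique right factor yields infinitely many factorizations of $C$, all sharing the fixed right factor $t - (h + \eps k_2)$ and all having the same circular translation polynomial $C_1$ as the product of their two left factors. I do not foresee a substantive obstacle: the core work — the vertical versus non-vertical case split, the linear algebra determining $(a,b,c)$, and the classification of linear factorizations of planar translational motions — has already been carried out before the statement, so the proof amounts to citing \autoref{lem:1} and \autoref{th:4} together with the explicit solution of~\eqref{eq:11}.
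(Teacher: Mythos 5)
Your argument is correct and is essentially the paper's own proof, which consists precisely of the discussion preceding the statement (the theorem is introduced with ``Thus, we have proved''): uniqueness of the right factor via the nonvanishing of $\Delta$ and the unique solution of \eqref{eq:11}, followed by \autoref{th:4} applied to the cofactor $C_1$. The only slight imprecision is that \eqref{eq:11} is not the condition for $t-k$ to be a right factor of $C$ (any $k_2 = a\qi + b\qj + c\qk$ of that shape yields a right factor) but rather the condition for the cofactor $C_1$ to be a circular translation and hence further factorizable; since you restrict to factorizations that continue all the way to linear factors, this does not affect the conclusion.
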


Closed loop linkages for the generation of vertical Darboux motions
are described in \cite{lee12}. Here, it seems that we closely missed
the possibility to construct a closed loop linkage with one degree of
freedom and only revolute joints that generates a general
(non-vertical) Darboux motion. Though we managed to factor the
non-vertical Darboux motion in infinitely many ways, we may not form a
linkage with one degree of freedom from two factorizations as they
have the right factor in common. Nonetheless, there is a way out of
this. It requires a ``multiplication trick'' which will be
investigated in more detail and generality in a forthcoming
publication. Here, we confine ourselves to present the basic idea at
hand of a concrete example.

We consider the Darboux motion $C = \xi P - \qi \eta \eps P \in
\DHSet[t]$ with
\begin{equation*}
  \xi = t^2 + 1,\quad
  \eta = \frac{5}{2} t - \frac{3}{4},\quad
  P = t-h\quad\text{and}\quad
  h=\frac{7}{9} \qi - \frac{4}{9} \qj + \frac{4}{9} \qk.
\end{equation*}
As seen above, this give us a first factorization $C = Q_1 Q_2 Q_3$, where
\begin{equation*}
  \begin{aligned}
    Q_1 & = t - \frac{7}{9} \qi - \frac{4}{9} \qj + \frac{4}{9} \qk - \frac{5}{4} \eps \qi + \frac{43}{64} \eps \qj - \frac{97}{64} \eps \qk, \\
    Q_2 & = t + \frac{7}{9} \qi + \frac{4}{9} \qj - \frac{4}{9} \qk,                                                                          \\
    Q_3 & = t - \frac{7}{9} \qi + \frac{4}{9} \qj - \frac{4}{9} \qk - \frac{5}{4} \eps \qi - \frac{43}{64} \eps \qj + \frac{97}{64} \eps \qk.
  \end{aligned}
\end{equation*}
In order to obtain a second factorization, we first set the right
factor to $Q_4 \coloneqq P$ and compute $C_1$ such that $C = C_1Q_4$:
\begin{equation*}
  C_1 = t^2 + 1 - \eps \qi \left( \frac{5}{2} t - \frac{3}{4} \right).
\end{equation*}
The motion polynomial $C_1$ parameterizes a translation in constant
direction. According to \autoref{th:4} in the appendix, it cannot be
written as the product of two linear motion polynomials. However,
after multiplying $C_1$ by $t^2 + 1$, it actually has infinitely many
factorizations into products of \emph{three} motion polynomials, one of
them being $C' (t^2 + 1)=Q_7 Q_6^2 Q_5,$ where
\begin{equation*}
  \begin{aligned}
    Q_7 & = t - \qj - \frac{5}{4} \eps \qi - \frac{3}{8} \eps \qk, \\
    Q_6 & = t + \qj,                                               \\
    Q_5 & = t - \qj - \frac{5}{4} \eps \qi + \frac{3}{8} \eps \qk.
  \end{aligned}
\end{equation*}
The multiplicity of the middle factor $Q_6$ is no coincidence but
inherent in the structure of the factorization problem at hand. The
kinematic structure to this factorization is an open 4R chain with
coinciding second and third axis, that is, actually just a 3R
chain. Because $C = Q_1Q_2Q_3$ and $\xi C = Q_7Q_6^2Q_5Q_4$ are
projectively equal, we can combine these two factorizations to form a
7R linkage where each rotation is defined by $Q_i$, $i =
1,\ldots,7$. It can be seen that the axes of $Q_1$, $Q_2$ are
parallel, as are the axes of $Q_3,\ Q_4$ and $Q_5,\ Q_6,\
Q_7$. Moreover, all joint angles are the same -- a property that has
not yet been observed in non-trivial linkages obtained from motion
polynomial factorization.

To complete above construction, we should check that the configuration
space of the 7R linkage is really a curve. A Gröbner basis computation
reveals that this is indeed the case. Note that the configuration
curve contains several components, also components of higher genus.
One component corresponds to the rational curve parameterized by $C$.
Thus, we have indeed constructed a 7R linkage whose coupler motion is
a non-vertical Darboux motion. In \autoref{fig:darboux}, we present
three configurations of this linkage in an orthographic projection
parallel to~$\qj$. We can observe the parallelity of axes and
constancy of one direction during the coupler motion.

\begin{figure}
  \centering
  \begin{overpic}[scale=0.85,trim=170 0 0 0,clip]{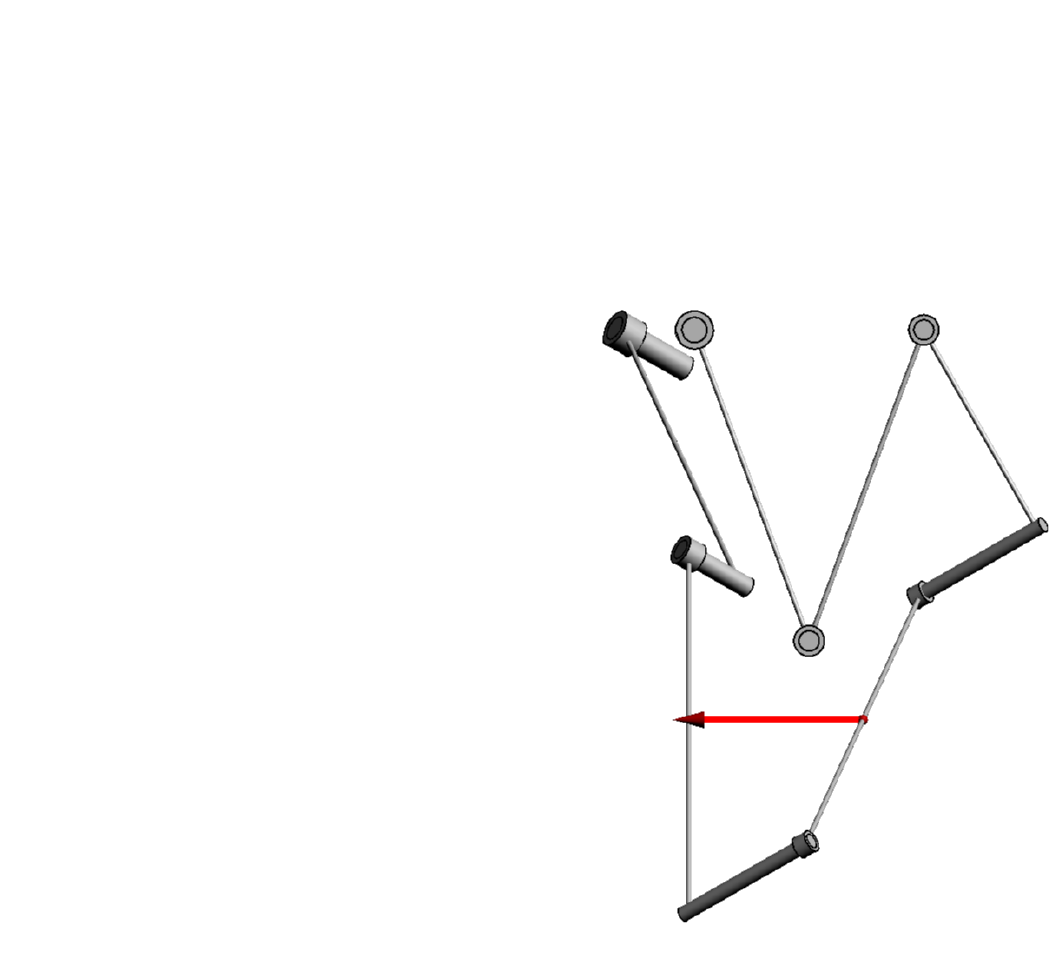}
    \small
    \put(0,59){$Q_1$}
    \put(11,35){$Q_2$}
    \put(13,12){$Q_3$}
    \put(39,36){$Q_4$}
    \put(37,64){$Q_5$}
    \put(18,29){$Q_6$}
    \put(14,63){$Q_7$}
  \end{overpic}
  \begin{overpic}[scale=0.85,trim=100 0 90 0,clip]{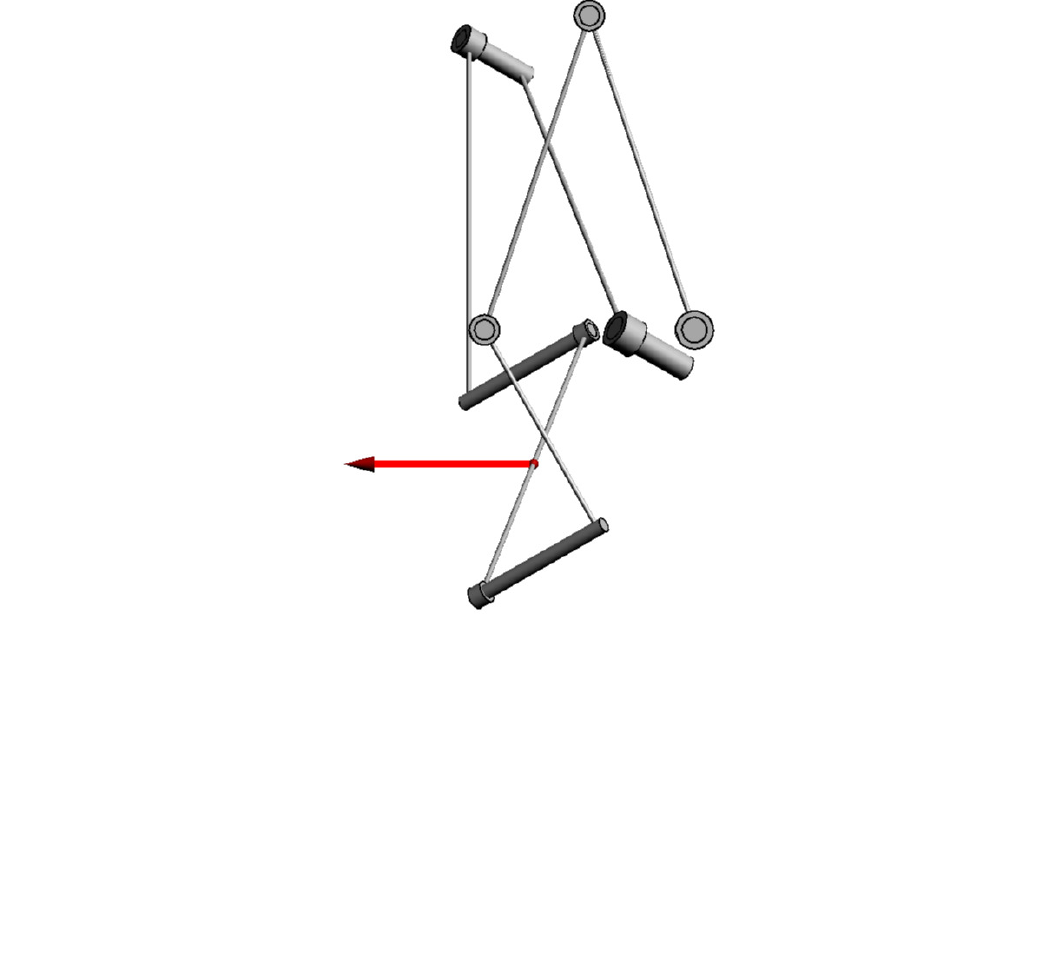}
    \small
    \put(28,58){$Q_1$}
    \put(6,91){$Q_2$}
    \put(18,67){$Q_3$}
    \put(21,38){$Q_4$}
    \put(8,68.5){\contour{white}{$Q_5$}}
    \put(28,96){$Q_6$}
    \put(35,69){$Q_7$}
  \end{overpic}
  \begin{overpic}[scale=0.85,trim=0 0 90 0,clip]{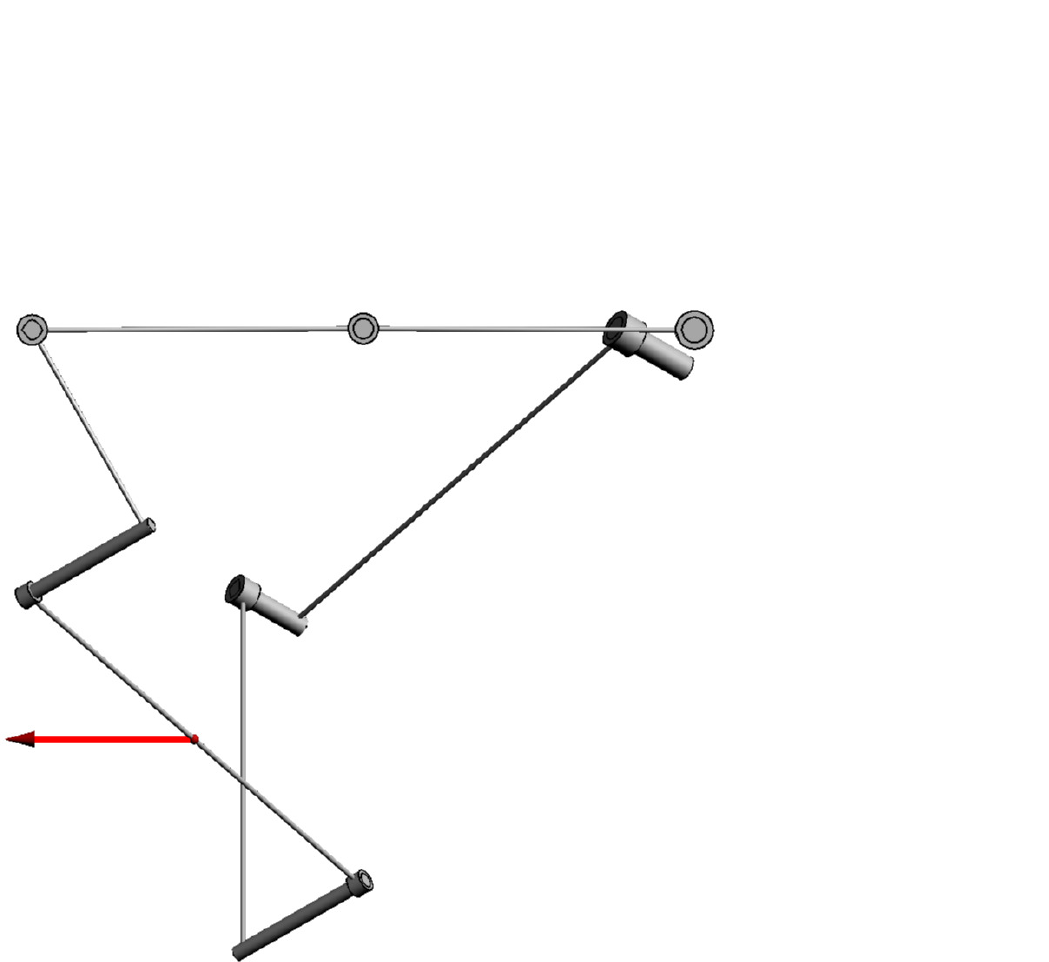}
    \small
    \put(65,58){$Q_1$}
    \put(26,31){$Q_2$}
    \put(27,7){$Q_3$}
    \put(9,37){$Q_4$}
    \put(4,68){$Q_5$}
    \put(39,68){$Q_6$}
    \put(68,69){$Q_7$}
  \end{overpic}
  \caption{A 7R linkage that generates a non-vertical Darboux motion.}
  \label{fig:darboux}
\end{figure}

\section{Conclusions and future research}
\label{sec:conclusions}

We have studied spatial straight line linkages obtained by factorizing
a cubic motion polynomial. The mobility and straight line property of
some of the resulting linkages can be explained geometrically while
for others the explanation remains algebraic. In the course of this
investigation, we showed that a Darboux motion can be decomposed into
a circular translation and a rotation and we presented one particular
example of a 7R Darboux linkage. A closer investigation of the used
``multiplication trick'' is left to a forthcoming publication.

Another natural step is to study general trajectory generation in
relation to the factorization of motion polynomials. We are already in
a position to announce concrete and promising results in this
direction.

As already mentioned in the introduction, the engineering relevance of
these linkages is probably limited. The present investigation should
be rather seen as an exercise in factorization of motion polynomials
and a demonstration of what it is capable of. We expect more
interesting and applicable linkages to arise from the factorization of
motion polynomials in other constraint varieties. Already a cursory
glance at the descriptions of constraint varieties in \cite{selig11}
shows that there is plenty of room for further investigations.

\appendix

\section{Factorization of quadratic translational motions}
\label{sec:translational}

In this appendix we provide a complete discussion of the
factorizability of translational motions that are parameterized by a
quadratic motion polynomial into the product of two linear motion
polynomials. We prove two theorems the first of which is often
referenced in the preceding text. The second theorem is not used in
this paper. We present it for the sake of completeness and because it
may be interesting in its own right.

Throughout this section, $C = \xi + \eps D$ is a monic, quadratic
motion polynomial with $\xi \in \RSet[t]$, $\deg \xi = 2$ and $D \in
\HSet[t]$. It is our aim to give a complete description of all
possibilities to write $C$ as $C = (t - h)(t - k)$ with rotation or
translation quaternions $h$, $k \in \DHSet$.

Lets start with some basic properties of the motion $C$. Because
$\xi$, the primal part of $C$, is a real polynomial, the motion is
translational. Because $C$ is of degree two and monic, the degree of
$D$ is at most one. Moreover, $C\cj{C} = \xi(\xi + \eps(D + \cj{D}))
\in \RSet[t]$ implies $\cj{D} = -D$. Conversely, any translational
motion of degree two can be written in that way.

The trajectory of the coordinate origin can be parameterized as
$x_0^{-1}(x_1,x_2,x_3)$ with polynomials $x_i \in \RSet[t]$, given by
\begin{equation}
  \label{eq:12}
  x_0 + \eps(x_1\qi + x_2\qj + x_3\qj) =
  \xi(\xi - 2\eps(\cj{D} - D)) =
  \xi(\xi - 2\eps D) \peq
  \xi - 2\eps D.
\end{equation}
We see that this trajectory is rational of degree two at most. Hence,
the motion under investigation is a \emph{planar, curvilinear
  translation.}

\begin{theorem}
  \label{th:4}
  Let $C = \xi + \eps D$ be a monic, quadratic motion polynomial with
  \emph{irreducible} $\xi \in \RSet[t]$, $\deg \xi = 2$, $D \in \HSet[t]$.
  Then the following statements are equivalent:
  \begin{enumerate}
  \item There exist two rotation quaternions $h$, $k \in \DHSet$ such
    that $C = (t-h)(t-k)$.
  \item There exist infinitely many rotation quaternions $h$, $k \in
    \DHSet$ such that $C = (t-h)(t-k)$.
  \item The motion polynomial $C$ parameterizes a circular
    translation.
  \item The polynomial $\xi$ divides $D\cj{D}$. (This implies $\xi
    \peq D\cj{D}$.)
  \end{enumerate}
\end{theorem}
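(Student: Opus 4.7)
The plan is to close the cycle $(1) \Rightarrow (4) \Rightarrow (2) \Rightarrow (1)$, the last arrow being immediate, and to treat $(3) \Leftrightarrow (4)$ separately as a geometric statement about the trajectory. For preliminary bookkeeping: writing $h = p + \eps q$ and $k = p' + \eps q'$, the motion condition on each linear factor forces $q, q'$ to be pure quaternions satisfying the Study relations $\operatorname{Vec}(p) \cdot q = 0$ and $\operatorname{Vec}(p') \cdot q' = 0$. Comparing primal parts of $(t-h)(t-k) = \xi + \eps D$ and invoking the irreducibility of $\xi$ forces $p' = \cj{p}$ and $p \notin \RSet$, so $h$ and $k$ are genuine rotation quaternions and the Study relations reduce to $v \perp q$ and $v \perp q'$, where $v = \operatorname{Vec}(p)$.

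For $(1) \Rightarrow (4)$, I extract the dual part $D = -Qt + R$ with $Q = q+q'$ and $R = pq' + q\cj{p}$. The orthogonalities $v \perp q,q'$ collapse $R$ into the compact form $R = sQ + v \times Q$ (where $s = \operatorname{Re}(p)$), a pure quaternion; direct expansion of $D\cj{D}$ together with the triple-product identity $Q \cdot (v \times Q) = 0$ and the relation $\Norm{v \times Q}^2 = \Norm{v}^2 \Norm{Q}^2$ (a consequence of $v \perp Q$) yields exactly $D\cj{D} = \Norm{Q}^2\,\xi$. This proves $\xi \mid D\cj{D}$, with the stronger $\xi \peq D\cj{D}$ holding whenever the motion is non-trivial (the degenerate case $Q = 0$ forces $D \equiv 0$, i.e., $C = \xi$, which is excluded). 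For $(4) \Rightarrow (2)$ I invert this calculation: given $D = D_1 t + D_0$ with $\xi \peq D\cj{D}$, set $Q = -D_1$ and solve $v \times D_1 = -(D_0 + sD_1)$ for $v \in D_1^\perp$. The two scalar relations packaged in $\xi \peq D\cj{D}$ are exactly what guarantees both that the right-hand side is perpendicular to $D_1$ (solvability of the cross-product equation) and that the unique such $v$ has norm $\sqrt{x_0 - s^2}$, so $p = s+v$ is a root of $\xi$. Thus $p$ is uniquely determined, but $q$ may be chosen freely in the two-dimensional plane $v^\perp$, with $q' = -D_1 - q$ automatically also in $v^\perp$; this produces a two-parameter family of factorizations.

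For $(3) \Leftrightarrow (4)$, I analyse the trajectory of the origin, which reads $T(t) = -2D(t)/\xi(t)$: a pure-quaternion-valued rational function of degree at most two that vanishes at $t = \infty$. If $T$ parameterizes a circle, that circle passes through the origin, so $\Norm{T - c}^2 = \Norm{c}^2$ for its center $c \in \RSet^3$; this rearranges to $D\cj{D} = -\xi\,(D \cdot c)$, and since the left-hand side is quadratic while the right-hand side is a priori cubic in $t$, the leading-coefficient condition $D_1 \cdot c = 0$ must hold, leaving $\xi \peq D\cj{D}$. Conversely, $\xi \peq D\cj{D}$ makes the linear system $D_1 \cdot c = 0$, $D_0 \cdot c = -\mu$ solvable (for the proportionality constant $\mu$); the trajectory then lies on a sphere around $c$ and, being a rational curve of degree at most two in $\RSet^3$, is planar, so its image is a plane section of a sphere, i.e., a circle. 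The principal technical hurdle is the cascade of simplifications in $(1) \Rightarrow (4)$: both Study conditions must be used to reduce $R$ to the compact form $sQ + v \times Q$, and the triple-product identity must then be deployed so that all cross terms cancel and $D\cj{D}$ collapses precisely to $\Norm{Q}^2\,\xi$. The same identities, run in reverse, drive the construction in $(4) \Rightarrow (2)$.
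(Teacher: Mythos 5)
Your proposal is correct and proves the same equivalences, but it closes the cycle differently than the paper does, and the difference is substantive in one place. The paper proves $1 \Rightarrow 4 \Rightarrow 3 \Rightarrow 2 \Rightarrow 1$ in a single cycle; you prove $1 \Rightarrow 4 \Rightarrow 2 \Rightarrow 1$ and attach $3 \Leftrightarrow 4$ on the side. For $1 \Rightarrow 4$ your vector computation ($R = sQ + v\times Q$, then $D\cj{D} = \Norm{Q}^2\xi$) is the coordinate form of the paper's slicker quaternion identity $D = -(h_2+k_2)(t-\cj{p})$, whence $D\cj{D} = \xi\,q\cj{q}$ immediately; same content. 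For $3 \Leftrightarrow 4$ the paper argues via the points at infinity of the quadratic trajectory lying on the absolute conic, while you write down the circle-through-the-origin equation $\Norm{T-c}^2 = \Norm{c}^2$ and match degrees; again equivalent, though you should note explicitly that the linear system for the center $c$ is solvable because $D_0$ and $D_1$ cannot be parallel when $\xi$ is irreducible and $\xi \peq D\cj{D}$ (parallelism would force $D\cj{D}$ to be a real square). The genuinely different step is how ``infinitely many'' is obtained: the paper's $3 \Rightarrow 2$ is a kinematic appeal to the multiple generation of a circular translation by parallelogram linkages, verified only by a footnoted example, whereas your $4 \Rightarrow 2$ inverts the forward computation and explicitly exhibits the full two-parameter family ($v$ and hence $p$ uniquely determined by the cross-product equation, $q$ free in the plane $v^\perp$, $q' = -D_1 - q$ automatically admissible). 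Your route is more self-contained and arguably more rigorous at that step; the paper's buys a kinematic interpretation (the parallelogram linkage) that is reused elsewhere in the text. Both treatments quietly set aside the degenerate case $D = 0$, for which the parenthetical $\xi \peq D\cj{D}$ in item 4 fails; this is harmless but worth a sentence.
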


\begin{proof}
  1 $\implies$ 4: Write $h = h_1 + \eps h_2$, $k = k_1 + \eps k_2$
  with rotation quaternions $h_1$, $h_2$, $k_1$, $k_2
  \in \HSet$. By equating the primal part of $(t-h)(t-k)$ with $\xi$
  we find $h_1 + k_1 \in \RSet$ and $h_1k_1 \in \RSet$. This is only
  possible if $k_1 = \cj{h}_1$. Let us write, for simplicity, $p
  \coloneqq h_1 = \cj{k_1}$. Then $\xi = t^2 - (p + \cj{p})t + p\cj{p}
  = (t - p)(t - \cj{p})$.

  Because $k = \cj{p} + \eps k_2$ is a rotation quaternion, we have
  $pk_2 = -\cj{k_2}\cj{p} = k_2\cj{p}$ (Study condition) and hence
  \begin{equation*}
    (t-p)k_2 = k_2t - pk_2 = k_2t - k_2\cj{p} = k_2(t-\cj{p}).
  \end{equation*}
  Using this, the dual part of
  $(t - h_1 - \eps h_2)(t - k_1 - \eps k_2)$ can be written as
  \begin{equation*}
    D =
    -(h_2(t - \cj{p}) + (t - p)k_2) =
    -(h_2 + k_2)(t - \cj{p}).
  \end{equation*}
  Compute now
  \begin{equation*}
    D\cj{D} = (h_2 + k_2)(t - \cj{p})(t - p)(\cj{h_2} + \cj{k_2})
            = \xi q\cj{q}
  \end{equation*}
  with $q = h_2 + k_2$. Thus, $\xi$ is, indeed, a factor
  of~$D\cj{D}$.

  4 $\implies$ 3: We already know that $C$ describes a curvilinear
  translation with rational quadratic trajectories given by
  \eqref{eq:12}. The trajectory of the coordinate origin (and hence
  all other trajectories) are circles if its points at infinity lie
  on the absolute conic of Euclidean geometry. Algebraically this
  means that $x_0 = \xi$ divides $x_1^2 + x_2^2 + x_3^2 = 4D\cj{D}$.
  But this is precisely the assumption.

  3 $\implies$ 2: A circular translation occurs in infinitely many
  ways as coupler motion of a parallelogram linkage. This linkage is
  composed of two 2R chains, each corresponding to one of infinitely
  many factorizations of $C$.\footnote{This can also be verified at
    hand of a concrete example. The circular translation $C = 1+t^2 -
    \eps (\qi + \qj t)$ allows the factorizations $C = (t - \qk - \eps
    (-a\qi + (1-b)\qj)) (t + \qk - \eps ( a\qi + b \qj)))$ with $a, b
    \in \RSet$.}

  The trivial final implication (2 $\implies$ 1) completes the proof.
\end{proof}

\begin{remark}
  By \autoref{th:1}, translation quaternions cannot occur in the
  factorization of $C$ if $\xi$ is irreducible. Hence \autoref{th:4}
  gives all factorizations in the case of irreducible $\xi$.
\end{remark}

\begin{theorem}
  \label{th:5}
  Let $C = \xi + \eps D$ be a monic, quadratic motion polynomial with
  \emph{reducible} $\xi \in \RSet[t]$, $\deg \xi = 2$, $D \in \HSet[t]$.
  \begin{itemize}
  \item If $\xi$ has no root of multiplicity two, there exist two
    translation quaternions $h$, $k$ such that any factorization of
    $C$ into the product of two translation quaternions is either $C =
    (t-h)(t-k)$ or $C = (t-k)(t-h)$.
  \item If $\xi$ has a root $\lambda$ of multiplicity two, $C$ can be
    written as the product of linear translation polynomials if and
    only if it is of the shape $C = (t-\lambda)^2 +
    \eps(t+\lambda)d_1$ with $d_1 \in \HSet$, $d_1 \neq 0$, $\cj{d_1}
    = -d_1$. In this case, it is a translation in constant direction
    and infinitely many factorizations exist.
  \end{itemize}
\end{theorem}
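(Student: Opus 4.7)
The plan is to parameterize every linear translation motion polynomial as $t - (\lambda + \eps d)$ with $\lambda \in \RSet$ and pure $d \in \HSet$; this shape is forced by requiring both $h + \cj{h} \in \RSet$ and $h\cj{h} \in \RSet$, combined with the prescription that the primal part of $h$ be real for a translation. Writing $h = \lambda + \eps d$, $k = \mu + \eps e$ and using that $t$ commutes with all coefficients,
\begin{equation*}
  (t-h)(t-k) = (t-\lambda)(t-\mu) + \eps\bigl(-(d+e)\,t + \mu d + \lambda e\bigr).
\end{equation*}
Matching the primal part with $\xi$ forces $\{\lambda,\mu\}$ to be exactly the pair of real roots of $\xi$, and the remaining dual-part condition becomes a linear system in $(d,e)$ with real coefficients.

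For the first bullet, write $D = D_1 t + D_0$ with $D_0, D_1$ pure quaternions; purity follows from $\cj{D} = -D$, recorded in the setup just before \autoref{th:4}. The dual-part matching becomes $d + e = -D_1$, $\mu d + \lambda e = D_0$, whose determinant $\lambda - \mu$ is nonzero by assumption. This yields a unique pure solution $(d,e)$, and purity of $d,e$ is automatic because the data $D_0, D_1$ are already pure and the coefficients of the system are real. Interchanging $\lambda \leftrightarrow \mu$ maps the solution to $(e,d)$, which is precisely the reversed product $(t-k)(t-h)$, so the two orderings give the only two translation-translation factorizations.

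For the second bullet, both primal parts must equal $\lambda$ and the dual part collapses to $-(d+e)(t-\lambda)$. Hence a factorization into two translation polynomials exists if and only if $D$ is of the form $(t-\lambda)d_1$ for a nonzero pure $d_1 = -(d+e)$, i.e.\ precisely when $C$ has the shape stated in the theorem. Conversely, for such $C$ every decomposition of $-d_1$ as the sum of two pure quaternions $d + e$ yields a valid factorization; since pure quaternions form a $3$-dimensional real vector space, the family of factorizations is infinite. Substituting $D = (t-\lambda)d_1$ and $\xi = (t-\lambda)^2$ into the trajectory formula~\eqref{eq:12} collapses the trace of the origin to $-2d_1/(t-\lambda)$, which traverses a line in the fixed direction~$d_1$, confirming the ``translation in constant direction'' claim.

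The only obstacle is notational bookkeeping: one must track that the purity condition $\cj{D} = -D$ propagates through the linear system so that the reconstructed $(d,e)$ really are pure quaternions, and hence that the corresponding $h,k$ are genuine translation quaternions rather than spurious elements of $\DHSet$. No ingredient beyond the elementary dual quaternion algebra already used in the proof of \autoref{th:4} is needed.
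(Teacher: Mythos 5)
Your proof is correct and follows essentially the same route as the paper: compare coefficients of $(t-h)(t-k)$ with those of $C$, note that the primal parts of $h$ and $k$ must be the real roots of $\xi$, and solve the resulting linear system with real coefficients for the pure dual parts (the paper concludes the ``only two orderings'' claim by observing that the factors commute, you by the symmetry of the system under $\lambda\leftrightarrow\mu$; these are equivalent). One point needs flagging, though. In the double-root case your computation gives $D=-(d+e)(t-\lambda)$, i.e.\ $D=(t-\lambda)d_1$, whereas the theorem states the shape $C=(t-\lambda)^2+\eps(t+\lambda)d_1$; these are not the same, yet you assert without comment that your condition ``is precisely the shape stated in the theorem.'' Your sign is in fact the correct one: the paper's own proof derives the solvability condition $\lambda d_1=-d_0$, which is equivalent to $D=(t-\lambda)d_1$, and the subsequent ``that is $D=(t+\lambda)d_1$'' (and hence the printed statement) contains a sign slip. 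So this is a typo in the statement rather than a gap in your argument, but you should have pointed out the discrepancy instead of silently identifying the two shapes.
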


\begin{proof}
  Write $h = h_0 + \eps h_2$, $k = k_0 + \eps k_2$ with $h_0$, $k_0
  \in \RSet \setminus \{0\}$, $h_2$, $k_2 \in \HSet$ with $\cj{h_2} =
  -h_2$, $\cj{k_2} = -k_2$ and compare coefficients of $C$ and $(t -
  h)(t - k)$. The reals $h_0$ and $k_0$ are determined as (real) roots
  of $\xi$. Provided $\xi$ has no double root, $h_2$ and $k_2$ are
  uniquely determined as
  \begin{equation*}
    h_2 = \frac{1}{k_0-h_0}(d_0-h_0d_1),\quad
    k_2 = \frac{1}{k_0-h_0}(k_0d_1 - d_0),
  \end{equation*}
  where $D = d_1t + d_0$. The factors $t-h$ and $t-k$ commute and the
  first claim follows.

  Assume now $h_0 = k_0 \eqqcolon \lambda$. Then, we have to solve
  \begin{equation*}
    h_2 + k_2 = d_1,\quad
    \lambda(h_2 + k_2) = -d_0.
  \end{equation*}
  A necessary and sufficient condition for existence of a solution is
  $\lambda d_1 = -d_0$, that is $D = (t + \lambda)d_1$.  The motion
  is, indeed, a translation in constant direction. Given $C =
  (t-\lambda)^2 + \eps(t + \lambda)d_1$, we can set $h_0 = k_0 =
  \lambda$ and determine $h_2$ and $k_2$ in infinitely many ways such
  that $h_2 + d_2 = d_1$ and $C = (t-h)(t-k) = (t-k)(t-h)$ are
  factorizations of~$C$.
\end{proof}

\begin{remark}
  Reducability of $\xi$ implies that in every factorization of $C$ a
  translation polynomial occurs. But then the second factor must also
  be a translation polynomial. Hence, \autoref{th:5} gives all
  factorizations for the case of reducible~$\xi$.
\end{remark}

\autoref{th:4} and \ref{th:5} describe all cases that admit
factorizations with two linear factors. But there also exist motion
polynomials $C = \xi + \eps D$ that do not admit such a
factorization. One example is
\begin{equation*}
  C = t^2 + 1 + \eps \qi t.
\end{equation*}
The primal part of $C$ is irreducible over $\RSet$ but the motion is a
translation in fixed direction. By \autoref{th:4}, $C$ cannot be
written as the product of two linear motion polynomials. By a variant
of the ``multiplication trick'' that we already used in
\autoref{sec:cases-degree-one} we can still find infinitely many
factorizations of $C(t^2+1)$ with four linear motion
polynomials. Enforcing identical consecutive factors, as at the end of
\autoref{sec:cases-degree-one}, will then produce a Sarrus
linkage. But this is again another story and shall be left to the
forthcoming publication.

\section*{Acknowledgments}

This work was supported by the Austrian Science Fund (FWF):
P~23831-N13, P~26607, and W1214-N15, project DK9.

\bibliographystyle{plainnat}
\bibliography{sl6R}

\end{document}